\newtheorem{example}[theorem]{Example}
\newtheorem{conjecture}{Conjecture}
\newcommand{\reals}{\mathbb{R}}
\newcommand{\spd}[1]{\mathbb P_{#1}}
\DeclareMathOperator{\ssharp}{\#}
\newcommand{\Sym}[1]{\mathfrak S_{#1}}
\newcommand{\Di}[1]{\mathfrak D_{#1}}
\newcommand{\Alt}[1]{\mathfrak A_{#1}}
\DeclareMathOperator{\stab}{Stab}
\DeclareMathOperator{\rstab}{RStab}
\newcommand{\norm}[1]{\left\Vert#1\right\Vert}
\begin{document}

%  Leave these commented lines here 
% \input{elaheader-volx-xx.tex}
% \setcounter{page}{1}

% \renewcommand{\thefootnote}{\fnsymbol{footnote}}
% \renewcommand{\thefootnote}{\arabic{footnote}}
% \renewcommand{\theequation}{\thesection.\arabic{equation}}

\bibliographystyle{plain}
\title{Constructing matrix geometric means
\thanks{Received by the editors on Month x, 200x.
Accepted for publication on Month y, 200y   Handling Editor: .}}
% Leave blank; editors will write the exact dates above

\author{Federico Poloni\thanks{Scuola Normale Superiore, Piazza dei Cavalieri, 7, 56126 Pisa, Italy (\texttt{f.poloni@sns.it}).}}

% Authors and running title to go on top of each page
\pagestyle{myheadings}
\markboth{F. Poloni}{Constructing matrix means}
\maketitle

\begin{abstract}
 In this paper, we analyze the process of ``assembling'' new matrix geometric means from existing ones, through function composition or limit processes. We show that for $n=4$ a new matrix mean exists which is simpler to compute than the existing ones. Moreover, we show that for $n>4$ the existing proving strategies cannot provide a mean computationally simpler than the existing ones.
\end{abstract}

\begin{keywords}
Matrix geometric mean, Positive definite matrix, Invariance properties, Groups of permutations
\end{keywords}
\begin{AMS}
65F30, 15A48, 47A64, 20B35.
\end{AMS}

%%%%%%%%%%%%%%%%%%%%%%%%%%%%%%%%%%%%%%%%%%%%%%%%%%%%%%%%%%%%%

\section{Introduction}
\paragraph{Literature review} In the last few years, several papers have been devoted to defining a proper way to generalize the concept of geometric mean to $n \geq 3$ Hermitian, positive definite $m \times m$ matrices. A seminal paper by Ando, Li and Mathias \cite{alm} defined the mathematical problem by stating ten properties that a ``good'' matrix geometric mean should satisfy. However, these properties do not uniquely define a multivariate matrix geometric mean; thus several different definitions appeared in literature.

Ando, Li and Mathias \cite{alm} first proposed a mean whose definition for $n$ matrices is based on a limit process involving several geometric means of $n-1$ matrices. Later Bini, Meini and Poloni \cite{bmp-means} noted that the slow convergence speed of this method prevents its use in applications; its main shortcoming is the fact that its complexity grows as $O(n!)$ with the number of involved matrices. In the same paper, they proposed a similar limit process with increased convergence speed, but still with complexity $O(n!)$. P\'alfia \cite{palfia} proposed a mean based on a similar process involving only means of $2$ matrices, and thus much simpler and cheaper to compute, but lacking property P3 (permutation invariance) from the ALM list. Lim \cite{lim} proposed a family of matrix geometric means that are based on an iteration requiring at each step the computation of a mean of $m\geq n$ matrices. Since the computational complexity for all known means greatly increases with $n$, the resulting family is useful as an example but highly impractical for numerical computations.

At the same time, Moakher \cite{moakher-simax,moakher} and Bhatia and Holbrook \cite{bhatiahol,bhatiabook} proposed a completely different definition, which we shall call the \emph{Riemannian centroid} of $A_1,A_2,\dots,A_n$. The Riemannian centroid $G^R(A_1,A_2,\dots,A_n)$ is defined as the minimizer of a sum of squared distances,
\begin{equation}\label{cartan1}
 G^{R}(A_1,A_2,\dots,A_n)=\arg\min_X \sum_{i=1}^n \delta^2(A_i,X),
\end{equation}
where $\delta$ is the geodesic distance induced by a natural Riemannian metric on the space of symmetric positive definite matrices. The same $X$ is the unique solution of the equation
\begin{equation}\label{cartan2}
 \sum_{i=1}^n \log(A_i^{-1}X)=0,
\end{equation}
involving the matrix logarithm function. While most of the ALM properties are easy to prove, it is still an open problem whether it satisfies P4 (monotonicity). The computational experiments performed up to now gave no counterexamples, but the monotonicity of the Riemannian centroid is still a conjecture \cite{bhatiahol}, up to our knowledge.

Moreover, while the other means had constructive definitions, it is not apparent how to compute the solution to either \eqref{cartan1} or \eqref{cartan2}. Two methods have been proposed, one based on a fixed-point iteration \cite{moakher} and one on the Newton methods for manifolds \cite{palfia,moakher}. Although both seem to work well on ``tame'' examples, their computational results show a fast degradation of the convergence behavior as the number of matrices and their dimension increase. It is unclear whether on more complicated examples there is convergence in the first place; unlike the other means, the convergence of these iteration processes has not been proved, as far as we know.

\paragraph{Notations}
Let us denote by $\spd{m}$ the space of Hermitian positive-definite $m \times m$ matrices. For all $A,B \in \spd{m}$, we shall say that $A<B$ ($A\leq B$) if $B-A$ is positive definite (semidefinite). With $A^*$ we denote the conjugate transpose of $A$.
We shall say that $\underline A=(A_i)_{i=1}^n \in (\spd{m})^n$ is a \emph{scalar} $n$-tuple of matrices if $A_1=A_2=\dots=A_n$. We shall use the convention that both $Q(\underline A)$ and $Q(A_1,\dots,A_n)$ denote the application of the map $Q:(\spd{m})^n \to \spd{m}$ to the $n$-tuple $\underline A$.

\paragraph{ALM properties}
Ando, Li and Mathias \cite{alm} introduced ten properties defining when a map $G:(\spd{m})^n \to \spd{m}$ can be called a \emph{geometric mean}. Following their paper, we report here the properties for $n=3$ only, for the sake of simplicity; the generalization to different values of $n$ is straightforward.
\begin{description}
 \item [P1] (consistency with scalars) If $A$, $B$, $C$ commute then $G(A,B,C)=(ABC)^{1/3}$.
 \item [P1'] This implies $G(A,A,A)=A$.
 \item [P2] (joint homogeneity) $G(\alpha A, \beta B, \gamma C)=(\alpha\beta\gamma)^{1/3}G(A,B,C)$, for each $\alpha, \beta, \allowbreak \gamma > 0$.
 \item [P2'] This implies $G(\alpha A, \alpha B, \alpha C)=\alpha G(A,B,C)$.
 \item [P3] (permutation invariance) $G(A,B,C)=G(\pi(A,B,C))$ for all the permutations $\pi(A,B,C)$ of $A$, $B$, $C$.
 \item [P4] (monotonicity) $G(A,B,C) \geq G(A',B',C')$ whenever $A \geq A'$, $B \geq B'$, $C \geq C'$.
 \item [P5] (continuity from above) If $A_n$, $B_n$, $C_n$ are monotonic decreasing sequences converging to $A$, $B$, $C$, respectively, then $G(A_n,B_n,C_n)$ converges to $G(A,B,C)$.
 \item [P6] (congruence invariance) $G(S^*AS, S^*BS, S^*CS)=S^*G(A,B,C)S$ for any nonsingular $S$.
 \item [P7] (joint concavity) If $A=\lambda A_1+ (1-\lambda)A_2$, $B=\lambda B_1+ (1-\lambda)B_2$, $C=\lambda C_1+ (1-\lambda)C_2$, then $G(A,B,C) \geq \lambda G(A_1,B_1,C_1)+(1-\lambda)G(A_2,B_2,C_2)$.
 \item [P8] (self-duality) $G(A,B,C)^{-1}=G(A^{-1},B^{-1},C^{-1})$.
 \item [P9] (determinant identity) $\det G(A,B,C)=(\det A \det B \det C)^{1/3}$.
 \item [P10] (arithmetic--geometric--harmonic mean inequality)
\[
 \frac{A+B+C}3 \geq G(A,B,C) \geq \left(\frac{A^{-1}+B^{-1}+C^{-1}}3 \right)^{-1}.
\]
\end{description}

\paragraph{The matrix geometric mean for $n=2$}
For $n=2$, the ALM properties uniquely define a matrix geometric mean which can be expressed explicitly as
\begin{equation}\label{defssharp}
 A \ssharp B := A(A^{-1}B)^{1/2}.
\end{equation}
This is a particular case of the more general map
\begin{equation}\label{defssharpt}
 A \ssharp_t B := A(A^{-1}B)^t, \quad t \in \reals,
\end{equation}
which has a geometrical interpretation as the parametrization of the geodesic joining $A$ and $B$ for a certain Riemannian geometry on $\spd{m}$ \cite{bhatiabook}.

\paragraph{The ALM and BMP means} Ando, Li and Mathias \cite{alm} recursively define a matrix geometric mean $G^{ALM}_n$ of $n$ matrices in this way. The mean $G^{ALM}_2$ of two matrices coincides with \eqref{defssharp}; for $n \geq 3$, suppose the mean of $n-1$ matrices $G^{ALM}_{n-1}$ is already defined. Given $A_1,\dots,A_n$, compute for each $j=1,2,\dots$
\begin{equation}\label{defalm}
 A_i^{(j+1)}:=G^{ALM}_{n-1}(A_1^{(j)}, A_2^{(j)}, \dots A_{i-1}^{(j)}, A_{i+1}^{(j)},\dots A_n^{(j)}) \quad i=1,\dots,n,
\end{equation}
where $A^{(0)}_i:=A_i$, $i=1,\dots n$.
The sequences $(A^{(j)}_i)_{j=1}^{\infty}$ converge to a common (not depending on $i$) matrix, and this matrix is a geometric mean of $A^{(0)}_1,\dots,A^{(0)}_n$.

The mean proposed by Bini, Meini and Poloni \cite{bmp-means} is defined in the same way, but with \eqref{defalm}  replaced by
\begin{equation}\label{defbmp}
 A_i^{(j+1)}:=G^{BMP}_{n-1}(A_1^{(j)}, A_2^{(j)}, \dots A_{i-1}^{(j)}, A_{i+1}^{(j)},\dots A_n^{(j)}) \ssharp_{1/n} A_i  \quad i=1,\dots,n.
\end{equation}
Though both maps satisfy the ALM properties, matrices $A,B,C$ exist for which $G^{ALM}(A,B,C) \neq G^{BMP}(A,B,C)$.

While the former iteration converges linearly, the latter converges cubically, and thus allows one to compute a matrix geometric mean with a lower number of iterations. In fact, if we call $p_k$ the average number of iterations that the process giving a mean of $k$ matrices takes to converge (which may vary significantly depending on the starting matrices), the total computational cost of the ALM and BMP means can be expressed as $O(n! p_3 p_4 \dots p_n m^3)$. The only difference between the two complexity bounds lies in the expected magnitude of the values $p_k$. The presence of a factorial and of a linear number of factors $p_k$ is undesirable, since it means that the problem scales very badly with $n$. In fact, already with $n=7,8$ and moderate values of $m$, a large CPU time is generally needed to compute a matrix geometric mean \cite{bmp-means}.

\paragraph{The P\'alfia mean}
P\'alfia \cite{palfia} proposed to consider the following iteration. Let again $A^{(0)}_i:=A_i,\,i=1,\dots, n$. Let us define
\begin{equation}\label{palfiamean}
 A^{(k+1)}_i:=A^{(k)}_i \ssharp A^{(k)}_{i+1},\quad i=1,\dots,n,
\end{equation}
where the indices are taken modulo $n$, i.e., $A^{(k)}_{n+1}=A^{(k)}_1$ for all $k$. We point out that the definition in the original paper \cite{palfia} is slightly different, as it considers several possible orderings of the input matrices, but the means defined there can be put in the form \eqref{palfiamean} up to a permutation of the starting matrices $A_1,\dots,A_n$.

As for the previous means, it can be proved that the iteration \eqref{palfiamean} converges to a scalar $n$-tuple; we call the common limit of all components $G^P(A_1,\dots,A_n)$. As we noted above, this function does not satisfy P3 (permutation invariance), and thus it is not a geometric mean in the ALM sense.

\paragraph{Other composite means} Apart from the Riemannian centroid, all the other definitions follow the same pattern:
\begin{itemize}
 \item build new functions of $n$ matrices by taking nested compositions of the existing means---preferably using only means of less than $n$ matrices;
 \item take the common limit of a set of $n$ functions defined as in the above step.
\end{itemize}
The possibilities for defining new iterations following this pattern are endless. Ando, Li, Mathias, and Bini, Meini, Poloni chose to use in the first step composite functions using computationally expensive means of $n-1$ matrices; this led to poor convergence results. P\'alfia chose instead to use more economical means of two variables as starting points; this led to better convergence (no $O(n!)$), but to a function which is not symmetric with respect to permutations of its entries (P3, permutation invariance).

As we shall see in the following, the property P3 is crucial: all the other ones are easily proved for a mean defined as composition/limit of existing means.

A natural question to ask is whether we can build a matrix geometric mean of $n$ matrices as the composition of matrix means of less matrices, without the need of a limit process. Two such unsuccessful attempts are reported in the paper by Ando, Li and Mathias \cite{alm}, as examples of the fact that it is not easy to define a matrix satisfying P1--P10. The first is 
\begin{equation}\label{g4rec}
 G^{4rec}(A,B,C,D):=(A \ssharp B) \ssharp (C \ssharp D).
\end{equation}
Unfortunately, there are matrices such that $(A \ssharp B) \ssharp (C\ssharp D)\neq (A \ssharp C) \ssharp (B \ssharp D)$, so P3 fails. A second attempt is
\begin{equation}\label{grec}
 G^{rec}(A,B,C):=(A^{4/3}\ssharp B^{4/3}) \ssharp C^{2/3},
\end{equation}
where the exponents are chosen so that P1 (consistency with scalars) is satisfied. Again, this function is not symmetric in its arguments, and thus fails to satisfy P3.

A second natural question is whether an iterative scheme such as the ones for $G^{ALM}$, $G^{BMP}$ and $G^P$ can yield P3 without having a $O(n!)$ computational cost. For example, if we could build a scheme similar to the ALM and BMP ones, but using only means of $\frac{n}{2}$ matrices in the recursion, then the $O(n!)$ growth would disappear. 

In this paper, we aim to analyze in more detail the process of ``assembling'' new matrix means from the existing ones, and show which new means can be found, and what cannot be done because of group-theoretical obstructions related to the symmetry properties of the composed functions. By means of a group-theoretical analysis, we will show that for $n=4$ a new matrix mean exists which is simpler to compute than the existing ones; numerical experiments show that the new definition leads to a significant computational advantage. Moreover, we will show that for $n>4$ the existing strategies of composing matrix means and taking limits cannot provide a mean which is computationally simpler than the existing ones.

\section{Quasi-means and notation}

\paragraph{Quasi-means}
Let us introduce the following variants to some of the Ando--Li--Mathias properties.
\begin{description}
 \item [P1''] Weak consistency with scalars. There are $\alpha, \beta, \gamma \in \reals$ such that if $A,B,C$ commute, then $G(A,B,C)=A^\alpha B^\beta C^\gamma$.
 \item[P2''] Weak homogeneity. There are $\alpha, \beta, \gamma \in \reals$ such that for each $r,s,t>0$, $G(rA,sB,tC)=r^\alpha s^\beta t^\gamma G(A,B,C)$.  Notice that if P1'' holds as well, these must be the same $\alpha, \beta, \gamma$ (proof: substitute scalar values in P1'').
 \item[P9'] Weak determinant identity. For all $d>0$, if $\det A=\det B=\det C=d$, then $\det G(A,B,C)=d$.
\end{description}

We shall call a \emph{quasi-mean} a function $Q:(\spd{m})^n \to (\spd{m})$ that satisfies P1'',P2'', P4, P6, P7, P8, P9'. This models expressions which are built starting from basic matrix means but are not symmetric, e.g., $A \ssharp G(B, C, D \ssharp E)$, \eqref{g4rec}, and \eqref{grec}.

\begin{theorem}
 If a quasi-mean $Q$ satisfies P3 (permutation invariance), then it is a geometric mean.
\end{theorem}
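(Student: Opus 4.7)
The strategy is to derive the five ALM properties that are not already built into the quasi-mean definition, namely P1, P2, P5, P9, and P10 (P3 is the hypothesis, and P4, P6, P7, P8 are part of the quasi-mean definition itself).

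\emph{Identifying the exponents and upgrading to P9.} Starting from P1'', on any commuting triple we have $G(A,B,C)=A^{\alpha}B^{\beta}C^{\gamma}$. Permuting the three arguments using P3 and specializing (for instance setting two of them to the identity) forces $\alpha=\beta=\gamma$. Applying P9' to a triple of the form $A=B=C$ with $\det A=d$ then yields $d^{3\alpha}=d$, whence $\alpha=1/3$; this is P1. The same symmetrization, together with the parenthetical remark in P2'' that ties its exponents to those of P1'', produces P2. To upgrade P9' to P9 I would rescale: write each of $A,B,C$ as a positive scalar times a unit-determinant matrix, pull the three scalars out of $G$ via P2, and compare determinants on both sides, invoking P9' for the renormalized triple.

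\emph{Continuity (P5) and the AGH inequality (P10).} For P5 I would extract continuity from joint concavity (P7): for every positive semidefinite $Y$, the real-valued map $X\mapsto\operatorname{tr}(G(X)Y)$ is concave on the open convex set $\spd{m}^n$, hence continuous, and in finite dimension this weak continuity upgrades to norm continuity. Monotone convergence from above is then a special case. For P10 I would run the standard cyclic-symmetrization: iterating P7 gives
\[
G\!\left(\tfrac{A+B+C}{3},\tfrac{A+B+C}{3},\tfrac{A+B+C}{3}\right)\geq \tfrac{1}{3}\bigl(G(A,B,C)+G(B,C,A)+G(C,A,B)\bigr),
\]
whose left-hand side equals $(A+B+C)/3$ by P1 and whose right-hand side equals $G(A,B,C)$ by P3. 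Applying the resulting inequality to $A^{-1},B^{-1},C^{-1}$ and inverting both sides via P8 yields the harmonic lower bound.

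\emph{Main obstacle.} The only genuinely nontrivial step is the passage from matrix joint concavity to continuity, since the classical theorem asserting continuity of concave functions on the interior of their domain is stated only for scalar-valued maps; the reduction via scalar functionals $\operatorname{tr}(\cdot\,Y)$ described above is what handles this cleanly. Once P1 and P2 are in place the rest of the work is essentially bookkeeping.
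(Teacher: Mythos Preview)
Your argument is correct and follows the same outline as the paper: first pin down $\alpha=\beta=\gamma=1/3$, then recover the missing ALM properties. The paper obtains $\alpha+\beta+\gamma=1$ from P2'' and P9' via $\det Q(2A,2B,2C)=2^{m(\alpha+\beta+\gamma)}$ on a unit-determinant triple, while you get the same equation from P1'' on the diagonal $A=B=C$; both work. The substantive difference is that the paper then simply cites Ando--Li--Mathias \cite{alm} for the implication that P5 and P10 follow from P1--P4 and P6--P9, whereas you supply self-contained proofs (continuity via the scalar concave functionals $X\mapsto\operatorname{tr}(G(X)Y)$, and P10 via the cyclic Jensen inequality). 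Your explicit rescaling derivation of P9 from P9' also fills a step the paper leaves implicit. So your version is longer but more self-contained; the two proofs are otherwise interchangeable.
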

\begin{proof}
 From P2'' and P3, it follows that $\alpha=\beta=\gamma$. From P9', it follows that if $\det A=\det B = \det C=1$,
 \[
  2^m=\det Q(2A,2B,2C)=\det\left(2^{\alpha+\beta+\gamma} Q(A,B,C)\right)=2^{m(\alpha+\beta+\gamma)},
 \]
thus $\alpha+\beta+\gamma=1$. The two relations combined together yield $\alpha=\beta=\gamma=1/3$. Finally, it is proved in Ando, Li and Mathias \cite{alm} that P5 and P10 are implied by the other eight properties P1--P4 and P6--P9.
\end{proof}

For two quasi-means $Q$ and $R$ of $n$ matrices, we shall write $Q=R$ if $Q(\underline A)=R(\underline A)$ for each $n$-tuple $\underline A \in \spd{m}$

\paragraph{Group theory notation}
The notation $H \leq G$ ($H < G$) means that $H$ is a subgroup (proper subgroup) of $G$. Let us denote by $\Sym{n}$ the symmetric group on $n$ elements, i.e., the group of all permutations of the set $\{1,2,\dots,n\}$. As usual, the symbol $(a_1 a_2 a_3 \dots a_k)$ stands for the permutation (``cycle'') that maps $a_1 \mapsto a_2$, $a_2 \mapsto a_3$, \dots $a_{k-1} \mapsto a_k$, $a_k \mapsto a_1$ and leaves the other elements of $\{1,2,\dots n\}$ unchanged. Different symbols in the above form can be chained to denote the group operation of function composition; for instance, $\sigma=(13)(24)$ is the permutation $(1,2,3,4)\mapsto (3,4,1,2)$. We shall denote by $\Alt{n}$ the alternating group on $n$ elements, i.e., the only subgroup of index 2 of $\Sym{n}$, and by $\Di{n}$ the dihedral group over $n$ elements, with cardinality $2n$. The latter is identified with the subgroup of $\Sym{n}$ generated by the rotation $(1,2,\dots,n)$ and the mirror symmetry $(2,n)(3,n-1)\cdots(n/2,n/2+2)$ (for even values of $n$) or $(2,n)(3,n-1)\cdots((n+1)/2,(n+3)/2)$ (for odd values of $n$).

\paragraph{Coset transversals}
Let now $H \leq \Sym{n}$, and let $\{\sigma_1,\dots, \sigma_r\} \subset \Sym{n}$ be a transversal for the right cosets  $H\sigma$, i.e., a set of maximal cardinality $r=n!/|H|$ such that $\sigma_j\sigma_i^{-1} \not \in H$ for all $i \neq j$. The group $\Sym{n}$ acts by permutation over the cosets $(H\sigma_1,\dots, H\sigma_r)$, i.e., for each $\sigma$ there is a permutation $\tau=\rho_H(\sigma)$ such that
\[
 (H\sigma_1\sigma,\dots,H\sigma_r\sigma)=(H\sigma_{\tau(1)},\dots,H\sigma_{\tau(r)}).
\]
It is easy to check that in this case $\rho_H:\Sym{n}\to\Sym{r}$ must be a group homomorphism. Notice that if $H$ is a normal subgroup of $\Sym{n}$, then the action of $\Sym{n}$ over the coset space is represented by the quotient group $\Sym{n}/H$, and the kernel of $\rho_H$ is $H$. 

\begin{example}
The coset space of $H=\Di{4}$ has size $4!/8=3$, and a possible transversal is $\sigma_1=e$, $\sigma_2=(12)$, $\sigma_3=(14)$. We have $\rho_H(\Sym{4}) \cong \Sym{3}$: indeed, the permutation $\sigma=(12)\in\Sym{4}$ is such that $(H\sigma_1\sigma,H\sigma_2\sigma,H\sigma_3\sigma)=(H\sigma_2,H\sigma_1,H\sigma_3)$, and therefore $\rho_H(\sigma)=(12)$, while the permutation $\tilde\sigma=(14)\in\Sym{4}$ is such that $(H\sigma_1\tilde\sigma,H\sigma_2\tilde\sigma,H\sigma_3\tilde\sigma)=(H\sigma_3,H\sigma_2,H\sigma_1)$, therefore $\rho_H(\tilde\sigma)=(13)$. Thus $\rho_H(\Sym{4})$ must be a subgroup of $\Sym{3}$ containing $(12)$ and $(13)$, that is, $\Sym{3}$ itself.
\end{example}

With the same technique, noting that $\sigma_i^{-1}\sigma_j$ maps the coset $H\sigma_i$ to $H\sigma_j$, we can prove that the action $\rho_H$ of $\Sym{n}$ over the coset space is transitive.

\paragraph{Group action and composition of quasi-means} We may define a right action of $\Sym{n}$ on the set of quasi-means of $n$ matrices as
\[
(Q\sigma)(A_1,\dots,A_n):=Q(A_{\sigma(1)},\dots,A_{\sigma(n)}).
\]
The choice of putting $\sigma$ to the right, albeit slightly unusual, was chosen to simplify some of the notations used in Section~\ref{s:limits}.

When $Q$ is a quasi-mean of $r$ matrices  and $R_1, R_2, \dots R_r$ are quasi-means of $n$ matrices, let us define $Q \circ (R_1, R_2, \dots R_r)$ as the map
\begin{equation}\label{defcomp}
 \left(Q \circ (R_1, R_2, \dots R_r)\right) (\underline A) := Q(R_1(\underline A), R_2(\underline A), \dots, R_r(\underline A)).
\end{equation}
\begin{theorem} \label{t:composition}
Let $Q(A_1,\dots,A_r)$ and $R_j(A_1,\dots A_n)$ (for $j=1,\dots,r$) be quasi-means. Then,
\begin{enumerate}
 \item For all $\sigma \in \Sym{r}$, $Q\sigma$ is a quasi-mean.
 \item $(A_1,\dots,A_r,A_{r+1}) \mapsto Q(A_1,\dots,A_r)$ is a quasi-mean.
 \item $Q \circ (R_1, R_2, \dots R_r)$ is a quasi-mean.
\end{enumerate}
\end{theorem}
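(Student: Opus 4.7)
The plan is to verify each of the seven defining properties of a quasi-mean (P1'', P2'', P4, P6, P7, P8, P9') for each of the three constructions in turn. Parts (1) and (2) are bookkeeping; part (3) requires a short genuine argument for each property, but only one of them is nonroutine.

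For part (1), if $Q$ has exponent tuple $(\alpha_1,\dots,\alpha_r)$ in P1'' (and the matching tuple in P2''), then from $(Q\sigma)(A_1,\dots,A_r)=Q(A_{\sigma(1)},\dots,A_{\sigma(r)})$ one reads off the permuted exponent tuple $(\alpha_{\sigma^{-1}(1)},\dots,\alpha_{\sigma^{-1}(r)})$; the remaining properties (P4, P6, P7, P8, P9') pass through the relabeling of inputs without change, since they are symmetric statements in the arguments. For part (2), the exponent tuple is extended by $0$ in the new coordinate: the dummy argument does not appear in the value, so P4, P6, P7, P8 are immediate, and P9' holds because the output determinant is already $d$ whenever the first $r$ inputs have determinant $d$, regardless of the extra matrix.

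For part (3) I would verify the seven properties in turn. For P1'', if $A_1,\dots,A_n$ commute, they can be simultaneously diagonalised, so each $R_j(\underline A)=\prod_i A_i^{\beta_{j,i}}$ is diagonal in the same basis; the $R_j(\underline A)$ therefore commute pairwise, and P1'' of $Q$ yields the composed exponent tuple $\gamma_i=\sum_j \alpha_j \beta_{j,i}$. P2'' follows analogously, with the same exponents. For P4, monotonicity of each $R_j$ in each argument promotes a componentwise inequality $\underline A\geq \underline A'$ to a componentwise inequality of the $R_j(\underline A)$, and then monotonicity of $Q$ finishes the argument. For P6, the congruence $S^{*}\cdot S$ passes through each $R_j$ and then through $Q$. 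For P8, apply self-duality of each $R_j$ first, then of $Q$. P9' is immediate: if all inputs have determinant $d$, each $R_j(\underline A)$ has determinant $d$ by P9' of the $R_j$, and then $Q$ preserves it by P9' of $Q$.

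The only step that deserves care is P7 (joint concavity). From concavity of each $R_j$ one obtains $R_j(\lambda \underline A_1+(1-\lambda)\underline A_2)\geq \lambda R_j(\underline A_1)+(1-\lambda)R_j(\underline A_2)$; to conclude, one first uses monotonicity of $Q$ to push this inequality through $Q$, and then joint concavity of $Q$ to split the right-hand side into the required convex combination. This is the only step in the proof where two distinct properties of $Q$ must be combined, and it is the structural reason why both monotonicity and concavity are bundled into the definition of a quasi-mean. Everything else amounts to a direct substitution.
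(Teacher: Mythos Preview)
Your proposal is correct and follows the same approach as the paper, which dismisses the proof in a single sentence: ``All properties follow directly from the monotonicity (P4) and from the corresponding properties for the means $Q$ and $R_j$.'' You have simply carried out the routine verifications that the paper leaves implicit, and your identification of P7 as the one place where monotonicity of $Q$ must be invoked alongside the corresponding property of the $R_j$ is exactly the content of the paper's parenthetical mention of P4.
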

\begin{proof}
 All properties follow directly from the monotonicity (P4) and from the corresponding properties for the means $Q$ and $R_j$.
\end{proof}

We may then define the \emph{isotropy group}, or \emph{stabilizer group} of a quasi-mean $Q$
\begin{equation}\label{defstab}
 \stab(Q) := \{ \sigma \in \mathfrak S^n : Q= Q\sigma\}.
\end{equation}

\section{Means obtained as map compositions}\label{s:compo}

\paragraph{Reductive symmetries}
Let us define the concept of \emph{reductive symmetries} of a quasi-mean as follows. 
\begin{itemize}
\item in the special case in which $G_2(A,B)=A\ssharp B$, the symmetry property that $A\ssharp B=B\ssharp A$ is a reductive symmetry.
\item let $Q\circ(R_1,\dots,R_r)$ be a quasi-mean obtained by composition. The symmetry with respect to the permutation $\sigma$ (i.e., the fact that $Q=Q\sigma$) is a \emph{reductive symmetry} for $Q\circ(R_1,\dots,R_r)$ if this property can be formally proved relying only on the reductive symmetries of $Q$ and $R_1,\dots, R_r$.
\end{itemize}

For instance, if we take $Q(A,B,C):=A\ssharp (B\ssharp C)$, then we can deduce that $Q(A,B,C)=Q(A,C,B)$ for all $A,B,C$, but not that $Q(A,B,C)=Q(B,C,A)$ for all $A,B,C$. This does not imply that such a symmetry property does not hold: if we were considering the operator $+$ instead of $\ssharp$, then it would hold that $A+B+C=B+C+A$, but there are no means of proving it relying only on the commutativity of addition --- in fact, associativity is crucial.

As we stated in the introduction, Ando, Li and Mathias \cite{alm} showed explicit counterexamples proving that all the symmetry properties of $G^{4rec}$ and $G^{rec}$ are reductive symmetries. We conjecture the following.
\begin{conjecture}\label{ass}
All the symmetries of a quasi-mean obtained by recursive composition from $G_2$ are reductive symmetries.
\end{conjecture}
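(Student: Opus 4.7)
The plan is to recast the conjecture as a separation statement about labeled binary trees. Any quasi-mean built by composition from $G_2$ corresponds to a finite rooted binary tree $T$ whose internal nodes represent applications of $\ssharp$ and whose leaves carry labels in $\{1,\dots,n\}$, with evaluation done bottom-up. A reductive symmetry is then exactly a permutation $\sigma\in\Sym{n}$ induced on labels by an automorphism of $T$ generated by swapping the two children at internal nodes. The conjecture becomes the statement that if $Q(\underline A)=(Q\sigma)(\underline A)$ holds identically on $(\spd{m})^n$ then $\sigma$ arises from such an automorphism; equivalently, for every non-reductive $\sigma$ one must exhibit an $\underline A$ at which $Q$ and $Q\sigma$ disagree.

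A natural first step is to specialize to diagonal (hence commuting) matrices. In this regime $\ssharp$ becomes the entry-wise geometric mean and $\log Q(\underline A)=\sum_{i=1}^n w_i\log A_i$, with leaf-depth weights $w_i=\sum_{\ell\mapsto i}2^{-d(\ell)}$. The identity $Q=Q\sigma$ on commuting inputs forces $w_i=w_{\sigma(i)}$, which rules out many permutations but not all non-reductive ones: the classical example $(A\ssharp B)\ssharp(C\ssharp D)$ versus $(A\ssharp C)\ssharp(B\ssharp D)$ produces identical weight vectors while being generically distinct. The next step is therefore to break commutativity via an infinitesimal expansion around $I$: setting $A_i=I+\varepsilon X_i$ and expanding $A(A^{-1}B)^{1/2}$ yields a formal power series $Q(\underline A)=I+\varepsilon P_1(\underline X)+\varepsilon^2 P_2(\underline X)+\dots$ whose coefficients $P_k$ are noncommutative polynomials in $X_1,\dots,X_n$ encoding, through nested commutators, the combinatorics of $T$. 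Equivalently one may work in the free Lie algebra on $X_1,\dots,X_n$ by passing to $\log Q(\exp(\varepsilon X_1),\dots,\exp(\varepsilon X_n))$, where $\ssharp$ reduces to averaging plus Baker--Campbell--Hausdorff corrections. The strategy is to prove that this infinite family $\{P_k\}$ determines $T$ up to tree automorphism.

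The main difficulty lies precisely in this final step. At second order one finds invariants of the form $\sum_{i<j}c_{ij}[X_i,X_j]^2+\dots$ where $c_{ij}$ depends on the depth of the most recent common ancestor of leaves labeled $i$ and $j$, turning part of the problem into a tree reconstruction from pairwise-distance data (much in the spirit of reconstructing phylogenies from an ultrametric). Extending this argument to trees with repeated labels, and ruling out hypothetical cancellations among higher-order BCH terms that would identify inequivalent trees, appears combinatorially delicate and seems to require either free-probability techniques or a structural injectivity theorem for the evaluation map from a suitable free object (parametrising $\ssharp$-trees modulo commutativity of children) into matrix-valued functions on $\spd{m}$. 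This obstacle is presumably the reason the statement remains conjectural; any full proof will likely combine an explicit separating family of matrices with such a structural theorem on the free side.
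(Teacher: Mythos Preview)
There is nothing to compare here: the paper does not prove this statement. It is labeled a \emph{Conjecture}, and immediately after stating it the authors write that they ``postulate that no `unexpected symmetries' appear'' and that numerical and theoretical evidence so far has shown none; no argument is offered. The subsequent results of the paper (Theorems~\ref{th:specialform}--\ref{th:iteration}) are phrased entirely in terms of the \emph{reductive} isotropy group $\rstab(Q)$ precisely so as to avoid relying on the conjecture.

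Your write-up is honest about this: you do not claim a proof, you sketch a programme (binary-tree encoding, commuting specialisation to get leaf weights, infinitesimal/BCH expansion to separate trees with identical weight vectors) and then explicitly flag the step that fails to close, namely showing that the full family of noncommutative polynomials $P_k$ determines the labeled tree up to child-swap automorphisms. That diagnosis is reasonable, and your example $(A\ssharp B)\ssharp(C\ssharp D)$ versus $(A\ssharp C)\ssharp(B\ssharp D)$ is exactly the one the paper uses to show that commuting (weight-vector) data alone cannot suffice. But since the paper contains no proof and yours is by your own account incomplete, the only accurate verdict is that the statement remains open in both places.
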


In other words, we postulate that no ``unexpected symmetries'' appear while examining quasi-means compositions. This is a rather strong statement; however, the numerical experiments and the theoretical analysis performed up to now never showed any invariance property that could not be inferred by those of the underlying means.

We shall prove several result limiting the reductive symmetries that a mean can have; to this aim, we introduce the \emph{reductive isotropy group}
\begin{equation}
 \rstab(Q)=\{\sigma \in \stab(Q) : \text{$Q=Q\sigma$ is a reductive symmetry}\}.
\end{equation}
We will prove that there is no quasi-mean $Q$ such that $\rstab(Q)=\Sym{n}$. This shows that the existing ``tools'' in the mathematician's ``toolbox'' do not allow one to construct a matrix geometric mean (with full proof) based only on map compositions; thus we need either to devise a completely new construction or to find a novel way to prove additional invariance properties involving map compositions.

\paragraph{Reduction to a special form}
The following results show that when looking for a reductive matrix geometric mean, i.e., a quasi-mean $Q$ with $\rstab{Q}=\Sym{n}$, we may restrict our search to quasi-means of a special form.
\begin{theorem}\label{th:specialform}
 Let $Q$ be a quasi-mean of $r+s$ matrices, and $R_1, R_2, \dots, R_r$, $S_1, S_2,\dots, S_s$ be quasi-means of $n$ matrices such that $R_i\neq S_j\sigma $ for all $i,j$ and every $\sigma \in \Sym{n}$. Then, 
\begin{multline}
\rstab(Q \circ (R_1,R_2,\dots, R_r,S_1,S_2,\dots, S_s))\\  \subseteq \rstab(Q \circ (R_1,\dots, R_r,R_1,R_1,\dots, R_1)).
\end{multline}
\end{theorem}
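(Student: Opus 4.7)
The strategy is to take any reductive proof witnessing $\sigma \in \rstab(P)$, with $P = Q\circ(R_1,\dots,R_r,S_1,\dots,S_s)$, and convert it into a reductive proof for $\tilde P = Q\circ(R_1,\dots,R_r,R_1,\dots,R_1)$. The guiding intuition is that replacing each $S_j$ by a copy of $R_1$ identifies previously distinct inner quasi-means, and identifications can only enlarge the stabilizer; the hypothesis $R_i \neq S_j\pi$ is what makes this informal idea precise.

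First I would unpack what a reductive proof of $\sigma \in \rstab(P)$ actually records. Such a proof exhibits a permutation $\tau\in\rstab(Q)$ of the $r+s$ outer slots together with, at each slot $k$, a reductive identification of the inner formal expression $T_{\tau(k)}$ with $T_k\sigma$, where $T_k$ denotes the $k$-th inner quasi-mean. Crucially, the definition of reductive symmetry forces these identifications to invoke only the reductive symmetries of each individual $T_k$ and of $Q$, never ``accidental'' identifications between distinct $T_k$'s.

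Next I would use the hypothesis to pin down the shape of $\tau$. Since $R_i\neq S_j\pi$ for every $i,j$ and every $\pi\in\Sym{n}$, no reductive identification $S_{\tau(k)-r}\equiv R_k\sigma$ is possible; hence $\tau$ must preserve the bipartition of slots into the $R$-block $\{1,\dots,r\}$ and the $S$-block $\{r+1,\dots,r+s\}$. Consequently $\tau$ restricts to $\tau_R\in\Sym{r}$ and $\tau_S\in\Sym{s}$, and the reductive proof splits cleanly into an $R$-part (identifications $R_{\tau_R(k)}\equiv R_k\sigma$) and an $S$-part (identifications $S_{\tau_S(j)}\equiv S_j\sigma$).

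Finally I would assemble the new proof for $\tilde P$ by reusing the same $\tau$: the $R$-part is verbatim unchanged, and on the $S$-block, now populated by $s$ copies of $R_1$, every required identity collapses to the single statement $R_1\equiv R_1\sigma$, i.e.\ $\sigma\in\rstab(R_1)$. The main obstacle, and the technical crux of the whole argument, is extracting $\sigma\in\rstab(R_1)$ from what the original proof gives. Following the $\tau_R$-orbit through position $1$ (which carries $R_1$) and closing the cycle yields only $\sigma^\ell\in\rstab(R_1)$, where $\ell$ is the cycle length. To close this gap I would exploit the fact that in $\tilde P$ the symbol $R_1$ now sits at the enlarged set of positions $\{1,r+1,\dots,r+s\}$, and look for a modified permutation $\tau'\in\rstab(Q)$ that merges the $\tau_R$-orbit of position $1$ with some of the new $R_1$-slots and thereby shortens the effective cycle to length one; verifying that such a $\tau'$ genuinely lies in $\rstab(Q)$ is the nontrivial point.
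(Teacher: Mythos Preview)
Your first two steps---extracting the outer permutation $\tau\in\rstab(Q)$ from the reductive proof, and using the hypothesis $R_i\neq S_j\pi$ to show that $\tau$ must preserve the $R$/$S$ bipartition---are exactly what the paper does.

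Where you diverge is the last step. The paper does \emph{not} try to establish $\sigma\in\rstab(R_1)$; it simply asserts that ``the same permutation'' $\tau$ sends $(R_1,\dots,R_r,R_1,\dots,R_1)$ to $(R_1\sigma,\dots,R_r\sigma,R_1\sigma,\dots,R_1\sigma)$ and stops. You are right to notice that, read literally, this assertion needs $R_1=R_1\sigma$ on the last $s$ slots, and that chasing the $\tau_R$-cycle through slot $1$ only yields $\sigma^{\ell}\in\rstab(R_1)$ for the cycle length $\ell$. So you have located a point the paper treats very loosely.

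However, your proposed patch---manufacturing a modified $\tau'\in\rstab(Q)$ that merges the $\tau_R$-orbit of slot $1$ with some of the new $R_1$-slots so as to force the effective cycle length down to one---cannot work. The only element of $\rstab(Q)$ actually produced by the argument is $\tau$, and the block-preservation you just proved prevents it from mixing slot $1$ with the former $S$-slots; nothing in the data hands you any other $\tau'$. Worse, the obstruction is intrinsic: take $Q$ with $\rstab(Q)=\Sym{3}$, $r=2$, $s=1$, choose $R_2=R_1\sigma$ with $\sigma\notin\rstab(R_1)$, and $S_1$ in a different $\Sym{n}$-orbit. Then $\sigma$ lies in the left-hand side via $\tau=(12)$, but the tuples $(R_1,R_2,R_1)$ and $(R_1\sigma,R_2\sigma,R_1\sigma)=(R_2,R_1,R_2)$ are not rearrangements of one another at all (two copies of $R_1$ versus two copies of $R_2$), so \emph{no} permutation $\tau'$ whatsoever can map one to the other. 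Your proposed mechanism therefore fails in principle, not just in detail. The honest summary is that you correctly spotted a gap the paper glosses over, but the fix you sketch does not close it; the paper's own proof simply halts at the very point where you became uneasy.
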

\begin{proof}
Let $\sigma \in \rstab(Q \circ (R_1,R_2,\dots, R_r,S_1,S_2,\dots S_s))$; since the only invariance properties that we may assume on $Q$ are those predicted by its invariance group, it must be the case that
\[
 (R_1\sigma, R_2\sigma, \dots, R_r\sigma, S_1\sigma ,  S_2\sigma, \dots S_s\sigma )
\]
is a permutation of $(R_1, R_2, \dots, R_r, S_1, S_2, \dots S_s)$ belonging to $\rstab(Q)$. Since $R_i\neq S_j\sigma $, this permutation must map the sets $\{R_1, R_2, \dots, R_r\}$ and $\{S_1, S_2, \dots S_s\}$ to themselves. Therefore, the same permutation maps
\[
(R_1, R_2, \dots, R_r, R_1, R_1, \dots R_1)
\]
 to 
\[
 (R_1\sigma , R_2\sigma , \dots, R_r\sigma , R_1\sigma, R_1\sigma , \dots R_1\sigma ).
\]
This implies that
\begin{multline*}
 Q(R_1, R_2, \dots, R_r, R_1, R_1, \dots R_1)
 =Q(R_1\sigma, R_2\sigma , \dots, R_r\sigma,  R_1\sigma, R_1\sigma, \dots R_1\sigma)
\end{multline*}
as requested.
\end{proof}

\begin{theorem}
 Let $M_1:=Q\circ(R_1,R_2,\dots,R_r)$ be a quasi-mean. Then there is a quasi-mean $M_2$ in the form 
\begin{equation}\label{specform}
\tilde Q \circ (\tilde R\sigma_1,\tilde R\sigma_2,\dots,\tilde R\sigma_{\tilde r}),  
\end{equation}
 where $(\sigma_1,\sigma_2,\dots,\sigma_{\tilde r})$ is a right coset transversal for $\rstab(\tilde R)$ in $\Sym{n}$, such that $\rstab(M_1) \subseteq \rstab(M_2)$.
\end{theorem}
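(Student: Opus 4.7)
The plan is to bring $M_1$ into the required canonical form through three consecutive rewritings, each of which cannot shrink $\rstab$.

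First, I would partition the inner maps $R_1,\dots,R_r$ into orbits under the right action $R\mapsto R\sigma$ of $\Sym{n}$. Fix the orbit $\mathcal O$ containing $R_1$ and set $\tilde R:=R_1$. Now apply Theorem~\ref{th:specialform} with the elements of $\mathcal O$ playing the role of the $R_i$ and the elements outside $\mathcal O$ playing the role of the $S_j$; by construction the hypothesis $R_i\neq S_j\sigma$ holds for every $\sigma\in\Sym{n}$. The theorem replaces every off-orbit slot by $\tilde R$ and gives an expression of the form $M_1'=Q'\circ(\tilde R\tau_1,\tilde R\tau_2,\dots,\tilde R\tau_r)$, with $\tau_j=e$ on the slots that were replaced; moreover $\rstab(M_1)\subseteq\rstab(M_1')$.

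Next, I would collapse reductive duplicates among these slots. Whenever $\tau_i\tau_j^{-1}\in\rstab(\tilde R)$ with $i\neq j$, the equality $\tilde R\tau_i=\tilde R\tau_j$ is a reductive equality and we can fuse the two corresponding inputs of $Q'$ by precomposing with the projection quasi-means that duplicate the kept coordinates; by Theorem~\ref{t:composition}(3) this yields a bona fide outer quasi-mean $Q''$, and the resulting $M_1''$ has $\rstab(M_1')\subseteq\rstab(M_1'')$, since any formal proof that used the longer expression lifts verbatim through the introduced projections. After this step the surviving permutations lie in pairwise distinct right cosets of $\rstab(\tilde R)$. Finally, I would pad to a full coset transversal. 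Let $\sigma_1,\dots,\sigma_{\tilde r}$ be a right coset transversal of $\rstab(\tilde R)$ in $\Sym{n}$ that contains the surviving representatives from the previous step, and for each coset not yet used append $\tilde R\sigma_i$ as a new argument, extending $Q''$ to a quasi-mean $\tilde Q$ on $\tilde r$ arguments that ignores the newly added slots; by Theorem~\ref{t:composition}(2) this extension is still a quasi-mean. The result $M_2:=\tilde Q\circ(\tilde R\sigma_1,\dots,\tilde R\sigma_{\tilde r})$ has the required form, and inserting ignored inputs cannot destroy any reductive symmetry, so $\rstab(M_1'')\subseteq\rstab(M_2)$. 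Chaining the three inclusions gives the claim.

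The main obstacle I expect is the second step, where one must verify that collapsing reductively equal slots cannot lose any reductive symmetry of the outer composition. This rests on the somewhat informal notion of what a ``formal proof'' built from reductive equalities is; the cleanest argument is to observe that any such proof for $M_1'$ refactors into a proof for $M_1''$ via the projection quasi-means introduced during the collapse, but making this precise requires a careful look at the underlying proof tree rather than pure symbolic manipulation. The first and third steps, by contrast, are direct appeals to Theorems~\ref{th:specialform} and~\ref{t:composition} respectively and involve only bookkeeping of orbits and cosets.
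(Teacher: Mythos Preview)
Your argument follows essentially the same route as the paper's: first replace the off-orbit inner maps by $\tilde R$ via Theorem~\ref{th:specialform}, then reindex over a full coset transversal by redefining the outer map. The paper merges your steps 2 and 3 into a single stroke, defining $\tilde Q(A_1,\dots,A_{\tilde r}):=Q(A_{k(1)},\dots,A_{k(r)})$ where $k(i)$ is the index of the coset containing $\tau_i$, so that duplicates and missing cosets are handled simultaneously; the issue you flag about whether this passage preserves $\rstab$ is treated only informally there as well.
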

\begin{proof}
Set $\tilde R=R_1$. For each $i=2,3,\dots,r$ if $R_i\neq \tilde R\sigma$, we may replace it with $\tilde R$, and by Theorem~\ref{th:specialform} the restricted isotropy group increases or stays the same. Thus by repeated application of this theorem, we 
may reduce to the case in which each $R_i$ is in the form $\tilde R\tau_i$ for some permutation $\tau_i$.

Since $\{\sigma_i\}$ is a right transversal, we may write $\tau_i=h_i \sigma_{k(i)}$ for some $h_i\in H$ and $k(i)\in\{1,2,\dots,\tilde r\}$. We have $\tilde R h=\tilde R$ since $h\in\stab\tilde R$, thus $R_i=\tilde R\sigma_{k(i)}$. The resulting quasi-mean is $Q\circ(\tilde R \sigma_{k(1)},\dots,\tilde R\sigma_{k(r)})$. Notice that we may have $k(i)=k(j)$, or some cosets may be missing. Let now $\tilde Q$ be defined as $\tilde Q(A_1,A_2,\dots,A_{\tilde r}):=Q(A_{k(1)},\dots,A_{k(r)})$; then we have
\begin{equation}\label{uguagl}	
 \tilde Q(\tilde R\sigma_1,\dots,\tilde R\sigma_{\tilde r})=
 Q(\tilde R\sigma_{k(1)},\dots,\tilde R\sigma_{k(r)})
\end{equation}
and thus the isotropy groups of the left-hand side and right-hand side coincide.
\end{proof}

For the sake of brevity, we shall define
\[
Q \circ R:=Q\circ(R\sigma_1,\dots,R\sigma_r),
\]
assuming a standard choice of the transversal for $H=\stab R$. Notice that $Q \circ R$ depends on the ordering of the cosets $H\sigma_1, \dots, H\sigma_r$, but not on the choice of the coset representative $\sigma_i$, since $Qh\sigma_i = Q\sigma_i$ for each $h\in H$.

\begin{example}
 The quasi-mean $(A,B,C) \mapsto (A \ssharp B) \ssharp (B \ssharp C)$ is $Q \circ Q$, where $Q(X,Y,Z)=X \ssharp Y$,  $H=\{e,(12)\}$, and the transversal is $\{e,(13),(23)\}$.
\end{example}
\begin{example}
The quasi-mean $(A,B,C) \mapsto (A \ssharp B) \ssharp C$ is not in the form \eqref{specform}, but in view of Theorem~\ref{th:specialform}, its restricted isotropy group is a subgroup of that of $(A,B,C) \mapsto (A \ssharp B) \ssharp (A \ssharp B)$.
\end{example}

The following theorem shows which permutations we can actually prove to belong to the reductive isotropy group of a mean in the form \eqref{specform}.

\begin{theorem}\label{t:isosharp}
Let $H \leq \Sym{n}$, $R$ be a quasi-mean of $n$ matrices such that $\rstab{R}=H$ and $Q$ be a quasi-mean of $r=n!/|H|$ matrices. Let $G\in\Sym{n}$ be the largest permutation subgroup such that $\rho_H(G) \leq \rstab(Q)$.
Then, $G = \rstab(Q \circ R)$.
\end{theorem}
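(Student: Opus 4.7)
The plan is to compute directly how an arbitrary $\sigma\in\Sym{n}$ acts on $Q\circ R$ and then read off the reductive stabilizer from that formula. Specifically, I will show that
\[
 (Q\circ R)\sigma \;=\; \bigl(Q\,\rho_H(\sigma)\bigr)\circ R
\]
as reductive identities. To get this, note that for each $i$ the element $\sigma_i\sigma$ lies in the coset $H\sigma_{\rho_H(\sigma)(i)}$, so $\sigma_i\sigma = h_i\,\sigma_{\rho_H(\sigma)(i)}$ for some $h_i\in H$. Since $H\leq\rstab(R)$, the elements of $H$ are reductive symmetries of $R$, hence $R\sigma_i\sigma=R\sigma_{\rho_H(\sigma)(i)}$ reductively. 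Applying this coordinate by coordinate to the tuple $(R\sigma_1,\dots,R\sigma_r)$ and using the definition of the right action of $\Sym{r}$ on quasi-means of $r$ variables yields the displayed equality.

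For the inclusion $G\subseteq\rstab(Q\circ R)$, I take $\sigma\in G$, so $\rho_H(\sigma)\in\rstab(Q)$ by hypothesis. Then $Q\,\rho_H(\sigma)=Q$ as a reductive symmetry of $Q$; composing on the right with the tuple $(R\sigma_1,\dots,R\sigma_r)$ preserves reductive equalities (this is just Theorem~\ref{t:composition}), and combined with the displayed identity above we obtain $(Q\circ R)\sigma=Q\circ R$ as a reductive symmetry. Hence $\sigma\in\rstab(Q\circ R)$.

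For the reverse inclusion $\rstab(Q\circ R)\subseteq G$, suppose $\sigma\in\rstab(Q\circ R)$. Using the displayed identity, the reductive equality $(Q\,\rho_H(\sigma))\circ R = Q\circ R$ holds. The key point, justified by the formalism of reductive symmetries (and by Conjecture~\ref{ass} in the background), is that the quasi-means $R\sigma_1,\dots,R\sigma_{r}$ are \emph{pairwise non-equivalent} by any reductive symmetry of $R$, precisely because the $\sigma_i$ form a transversal for $\rstab(R)=H$. Consequently, every reductive symmetry of $Q\circ(R\sigma_1,\dots,R\sigma_r)$ must come from a permutation of its slots that preserves the tuple up to the reductive symmetries of $Q$ alone; that is, it must correspond to an element of $\rstab(Q)$ acting on the indices. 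Translating back, $\rho_H(\sigma)\in\rstab(Q)$, i.e., $\sigma\in G$.

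The main obstacle is this last step: showing that no ``unexpected'' reductive identity between $Q\circ(R\sigma_1,\dots,R\sigma_r)$ and its permuted version can arise other than from a genuine reductive symmetry of $Q$. This is exactly where the distinctness of the $R\sigma_i$ as reductive quasi-means, guaranteed by the transversal structure of $\{\sigma_1,\dots,\sigma_r\}$ modulo $\rstab(R)$, is crucial; the rest of the proof is a mechanical manipulation of the right action of $\Sym{n}$ on cosets via $\rho_H$.
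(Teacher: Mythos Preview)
Your proof is correct and follows essentially the same approach as the paper's: both arguments rest on the coset computation $\sigma_i\sigma=h_i\sigma_{\rho_H(\sigma)(i)}$ with $h_i\in H=\rstab(R)$, giving $R\sigma_i\sigma=R\sigma_{\rho_H(\sigma)(i)}$ reductively, and then both directions of the equality are read off from the fact that the $R\sigma_i$ are pairwise reductively inequivalent. Your presentation is slightly more streamlined in that you first isolate the identity $(Q\circ R)\sigma=(Q\,\rho_H(\sigma))\circ R$ and then use it for both inclusions, whereas the paper carries out the forward computation inline; but the content is the same.
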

\begin{proof}
 Let $\sigma \in G$ and $\tau=\rho_H(\sigma)$; we have
\begin{equation*}
\begin{split}
 (Q \circ R)\sigma(\underline A)&  =  Q\bigl(R\sigma_1 \sigma(\underline A),R\sigma_2 \sigma(\underline A), \dots, R\sigma_r \sigma(\underline A)\bigr)\\
&=Q\bigl(R\sigma_{\tau(1)} (\underline A),R\sigma_{\tau(2)} (\underline A), \dots, R\sigma_{\tau(r)}(\underline A)\bigr)\\
&=Q\bigl(R\sigma_1(\underline A),R\sigma_2 (\underline A), \dots, R\sigma_r (\underline A)\bigr),
\end{split}
\end{equation*}
where the last equality holds because $\tau \in \stab(Q)$.

Notice that the above construction is the only way to obtain invariance with respect to a given permutation $\sigma$: indeed, to prove invariance relying only on the invariance properties of $Q$, $(R\sigma_1\sigma,\dots,R\sigma_r\sigma)=(R\sigma_{\tau(1)},\dots,R\sigma_{\tau(r)})$ must be a permutation of $(R\sigma_1,\dots,R\sigma_r)$ belonging to $\rstab Q$, and thus $\rho_H(\sigma)=\tau \in \stab Q$. Thus the reductive invariance group of the composite mean is precisely the largest subgroup $G$ such that $\rho_H(G)\leq \stab Q$.
\end{proof}

\begin{example}\label{tournamentmean}
Let $n=4$, $Q$ be any (reductive) geometric mean of three matrices (i.e., $\rstab Q=\Sym{3}$), and $R(A,B,C,D):=(A \ssharp C) \ssharp (B \ssharp D)$. We have $H=\rstab R=\Di{4}$, the dihedral group over four elements, with cardinality 8. There are $r=4!/|H|=3$ cosets. Since $\rho_H(\Sym{4})$ is a subset of $\stab Q=\Sym{3}$, the isotropy group of $Q\circ R$ contains $G=\Sym{4}$ by Theorem~\ref{t:isosharp}. Therefore $Q\circ R$ is a geometric mean of four matrices.
\end{example}

Indeed, the only assertion we have to check explicitly is that $\rstab R=\Di{4}$. The isotropy group of $R$ contains $(24)$ and $(1234)$, since by using repeatedly the fact that $\ssharp$ is symmetric in its arguments we can prove that $R(A,B,C,D)=R(A,D,C,B)$ and $R(A,B,C,D)=R(D,A,B,C)$. Thus it must contain the subgroup generated by these two elements, that is, $\Di{4}\leq\rstab R$. The only subgroups of $\Sym{4}$ containing $\Di{4}$ as a subgroup are the two trivial ones $\Sym{4}$ and $\Di{4}$. We cannot have $\rstab R=\Sym{4}$, since $R$ has the same definition as $G^{4rec}$ of equation \eqref{g4rec}, apart from a reordering, and it was proved \cite{alm} that this is not a geometric mean.

It is important to notice that by choosing $G_3=G_3^{ALM}$ or $G_3=G_3^{BMP}$ in the previous example we may obtain a geometric mean of four matrices using a single limit process, the one needed for $G_3$. This is more efficient than $G^{ALM}_4$ and $G^{BMP}_4$, which compute a mean of four matrices via several means of three matrices, each of which requires a limit process in its computation. We will return to this topic in Section~\ref{s:exp}.

\paragraph{Above four elements}
Is it possible to obtain a reductive geometric mean of $n$ matrices, for $n>4$, starting from simpler means and using the construction of Theorem~\ref{t:isosharp}? The following result shows that the answer is no.

\begin{theorem}
Suppose $G :=\rstab(Q\circ R)\geq \Alt{n}$ and $n>4$. Then $\Alt{n} \leq \rstab(Q)$ or $\Alt{n} \leq \rstab(R)$.
\end{theorem}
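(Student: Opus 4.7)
The plan is to combine Theorem~\ref{t:isosharp} with the simplicity of $\Alt{n}$ for $n>4$. Setting $H=\rstab(R)$, Theorem~\ref{t:isosharp} will immediately convert the hypothesis $\Alt{n}\le G=\rstab(Q\circ R)$ into $\rho_H(\Alt{n})\le\rstab(Q)$, reducing the whole question to analyzing the restriction $\rho_H|_{\Alt{n}}\colon\Alt{n}\to\Sym{r}$.

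The next step will be a dichotomy argument. The kernel $K$ of $\rho_H|_{\Alt{n}}$ is a normal subgroup of $\Alt{n}$, so invoking simplicity (which requires precisely $n\geq 5$) forces either $K=\{e\}$ or $K=\Alt{n}$. In the first case, the restriction is injective and $\rho_H(\Alt{n})$ is a subgroup of $\rstab(Q)$ abstractly isomorphic to $\Alt{n}$, giving the first alternative. In the second, every element of $\Alt{n}$ lies in $\ker\rho_H$, and by choosing the coset representative $\sigma_1=e$ one reads off from the definition of $\rho_H$ that any $\sigma\in\ker\rho_H$ must satisfy $H\cdot e\cdot\sigma = H$, i.e.\ $\sigma\in H$; hence $\Alt{n}\le H=\rstab(R)$, the second alternative.

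There is no real obstacle here: the argument is essentially routine once one recognizes that the simplicity of $\Alt{n}$ is the correct tool. The hypothesis $n>4$ enters only at this step, and its necessity is illuminating: for $n=4$, $\Alt{4}$ contains the Klein four-group as a proper normal subgroup, so an intermediate-sized kernel becomes possible and the argument breaks down. This is precisely the loophole exploited by Example~\ref{tournamentmean} to build the new mean for $n=4$, and the fact that it cannot be re-exploited for larger $n$ is exactly what the present theorem formalizes.
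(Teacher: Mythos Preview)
Your argument is correct and follows the same overall strategy as the paper's proof: analyze the kernel of $\rho_H$ via the normal-subgroup structure and split into cases. The differences are minor but worth noting. You restrict $\rho_H$ to $\Alt{n}$ and invoke the simplicity of $\Alt{n}$ directly, yielding a clean two-case dichotomy; the paper instead keeps $\rho_H$ defined on all of $\Sym{n}$ and appeals to the classification of normal subgroups of $\Sym{n}$, which produces three cases ($\{e\}$, $\Alt{n}$, $\Sym{n}$). In the large-kernel case you read off $\ker\rho_H\subseteq H$ immediately from the fact that $\sigma\in\ker\rho_H$ must fix the coset $H\sigma_1=H$; the paper reaches the same endpoint by using transitivity of the coset action to bound the number of cosets (at most two), and then identifying the index-$\le 2$ subgroups of $\Sym{n}$. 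Your route is marginally more economical, but the underlying mechanism---that for $n>4$ there is no room for an intermediate kernel---is identical.
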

\begin{proof}
 Let us consider $K=\ker \rho_H$. It is a normal subgroup of $\Sym{n}$, but for $n>4$ the only normal subgroups of $\Sym{n}$ are the trivial group $\{e\}$, $\Alt{n}$ and $\Sym{n}$ \cite{dixonmort}. Let us consider the three cases separately.
\begin{enumerate}
 \item $K=\{e\}$. In this case, $\rho_H(G) \cong G$, and thus $G \leq \rstab{Q}$.
 \item $K=\Sym{n}$. In this case, $\rho_H(\Sym{n})$ is the trivial group. But the action of $\Sym{n}$ over the coset space is transitive, since $\sigma_i^{-1}\sigma_j$ sends the coset $H\sigma_i$ to the coset $H\sigma_j$. So the only possibility is that there is a single coset in the coset space, i.e., $H=\Sym{n}$.
 \item $K=\Alt{n}$. As in the above case, since the action is transitive, it must be the case that there are at most two cosets in the coset space, and thus $H=\Sym{n}$ or $H=\Alt{n}$. %\qedhere
\end{enumerate}
\end{proof}
Thus it is impossible to apply Theorem \ref{t:isosharp} to obtain a quasi-mean with reductive isotropy group containing $\Alt{n}$, unless one of the two starting quasi-means has a reductive isotropy group already containing $\Alt{n}$.

\section{Means obtained as limits}\label{s:limits}

\paragraph{An algebraic setting for limit means}
We shall now describe a unifying algebraic setting in terms of isotropy groups, generalizing the procedures leading to the means defined by limit processes $G^{ALM}$, $G^{BMP}$ and $G^P$.

Let $S: (\spd{m})^n \to (\spd{m})^n$ be a map; we shall say that $S$ \emph{preserves} a subgroup $H < \Sym{n}$ if there is a map $\tau: H \to H$ such that $Sh(\underline A)=\tau(h)S(\underline A)$ for all $\underline A \in \spd{m}$.

\begin{theorem}\label{th:iteration}
 Let $S: (\spd{m})^n \to (\spd{m})^n$ be a map and $H<\Sym{n}$ be a permutation group such that
\begin{enumerate}
 \item $(\underline A) \to \bigl(S(\underline A)\bigr)_i$ is a quasi-mean for all $i=1,\dots,n$,
 \item $S$ preserves $H$,
 \item for all $\underline A \in (\spd{m})^n$, $\lim_{k \to \infty} S^{k}(\underline A)$ is a scalar $n$-tuple\footnote{Here $S^k$ denotes function iteration: $S^1=S$ and $S^{k+1}(\underline A)=S(S^k(\underline A))$ for all $k$.},
\end{enumerate}
and let us denote by $S^{\infty}(\underline A)$ the common value of all entries of the scalar $n$-tuple $\lim_{k \to \infty} S^{k}(\underline A)$.
Then, $S^\infty(\underline A)$ is a quasi-mean with isotropy group containing $H$.
\end{theorem}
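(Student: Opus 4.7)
The overall plan is to verify each of the quasi-mean axioms (P1'', P2'', P4, P6, P7, P8, P9') for $S^\infty$ by first checking that they hold for every finite iterate $S^k$ and then pushing them through the limit; finally, to verify $H\leq\stab(S^\infty)$ using the preservation hypothesis. By Theorem~\ref{t:composition}, each component $(S(\underline A))_i$ being a quasi-mean implies that each component of $S^k$ is a quasi-mean as well, since $S^{k+1}$ is obtained by composing $S$ with $n$ quasi-means. Monotonicity (P4), concavity (P7), congruence invariance (P6), self-duality (P8), and weak determinant identity (P9') all survive pointwise limits of the finite iterates: monotonicity and concavity are preserved because the defining inequalities are closed; congruence and self-duality follow from continuity of $X\mapsto S^*XS$ and $X\mapsto X^{-1}$ on $\spd{m}$; and P9' is preserved because each iterate maps the level set $\{\det A_i = d\,\forall i\}$ into itself.

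The slightly more delicate properties are P1'' and P2''. For each component $i$, write the exponents from P1'' and P2'' as a vector $(\alpha_{i1},\dots,\alpha_{in})$, and let $M$ be the $n\times n$ matrix $(\alpha_{ij})$. If the $A_j$ commute pairwise, then so do the entries of $S(\underline A)$, since each is of the form $\prod_j A_j^{\alpha_{ij}}$, living in the commutative algebra generated by the $A_j$. By induction, the entries of $S^k(\underline A)$ are products $\prod_j A_j^{\gamma_{ij}^{(k)}}$ with exponents $(\gamma_{ij}^{(k)})=M^k$. The hypothesis that $S^k(\underline A)$ converges to a scalar $n$-tuple forces the rows of $M^k$ to converge to a common vector $(\beta_j)_j$; these $\beta_j$ are then the exponents needed for P1'' of $S^\infty$. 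For P2'', apply the same reasoning in logarithmic coordinates: scaling the input by $(r_1,\dots,r_n)$ produces at step $k$ a scaling of $S^k(\underline A)$ whose log-vector is $M^k(\log r_1,\dots,\log r_n)^\top$, and this converges to the constant vector $\sum_j \beta_j \log r_j$, giving the weak homogeneity with the same $\beta_j$.

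For the isotropy group, the preservation hypothesis $Sh = \tau(h)S$ for $h \in H$ yields by induction $S^k h = \tau^k(h) S^k$ for some $\tau^k(h)\in H$. Applying both sides to $\underline A$, the right-hand side is a permutation (by $\tau^k(h)$) of the tuple $S^k(\underline A)$, whose components all converge to $S^\infty(\underline A)$; hence any permutation of this tuple converges to $\bigl(S^\infty(\underline A),\dots,S^\infty(\underline A)\bigr)$ as well. On the other hand, $S^k(h\underline A)$ converges to $\bigl(S^\infty(h\underline A),\dots,S^\infty(h\underline A)\bigr)$. Equating the common limits gives $S^\infty(h\underline A)=S^\infty(\underline A)$, so $h\in\stab(S^\infty)$. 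The main technical point to get right is the P1''/P2'' step, where one must ensure that the exponents really do track linearly under iteration and that the hypothesis on the scalar limit forces the rows of $M^k$ to agree in the limit; all the remaining axioms transfer through the limit by routine continuity arguments.
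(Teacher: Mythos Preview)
Your proof is correct and follows the same structure as the paper's: invoke Theorem~\ref{t:composition} so that each component of $S^k$ is a quasi-mean, pass the axioms through the limit, and then use the induction $S^k h=\tau^k(h)S^k$ together with convergence to a scalar $n$-tuple to obtain $H\leq\stab(S^\infty)$. Your treatment of P1'' and P2'' via the exponent matrix $M^k$ is in fact more careful than the paper's own argument, which simply asserts that ``all the properties defining a quasi-mean pass to the limit'' without isolating the existence-of-exponents issue.
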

\begin{proof}
From Theorem \ref{t:composition}, it follows that $\underline A \mapsto \bigl(S^k(\underline A)\bigr)_i$ is a quasi-mean for each $k$. Since all the properties defining a quasi-mean pass to the limit, $S^\infty$ is a quasi-mean itself.

Let us take $h \in H$ and $\underline A \in \spd{n}$. It is easy to prove by induction on $k$ that $S^kh(\underline A)=\tau^k(h) \left(S^k(\underline A)\right)$. Now, choose a matrix norm inducing the Euclidean topology on $\spd{m}$; let $\varepsilon>0$ be fixed, and let us take $K$ such that for all $k> K$ and for all $i=1,\dots,n$ the following inequalities hold:
\begin{itemize}
 \item $\norm {\bigl(S^k(\underline A)\bigr)_i - S^{\infty}(\underline A)} < \varepsilon$,
\item $\norm {\bigl(S^kh(\underline A)\bigr)_i - S^{\infty}h(\underline A)} < \varepsilon$.
\end{itemize}
We know that $\bigl(S^kh(\underline A)\bigr)_i=\bigl(\tau^k(h)S^k(\underline A)\bigr)_i=\bigl(S^k(\underline A)\bigr)_{\tau^k(h)(i)}$, therefore
\begin{multline*}
 \norm {S^{\infty}(\underline A) - S^{\infty}h(\underline A)} \leq \norm {\bigl(S^k(\underline A)\bigr)_{\tau^k(h)(i)} - S^{\infty}(\underline A)}\\ + \norm {\bigl(S^kh(\underline A)\bigr)_i - S^{\infty}h(\underline A)} < 2\varepsilon.
\end{multline*}
Since $\varepsilon$ is arbitrary, the two limits must coincide. This holds for each $h \in H$, therefore $H \leq \stab{S^{\infty}}$.
\end{proof}

\begin{example}
 The map $S$ defining $G^{ALM}_4$ is
\[
  \begin{bmatrix}
   A \\ B \\ C \\D
  \end{bmatrix}
  \mapsto
  \begin{bmatrix}
   G_3^{ALM}(B,C,D) \\ G_3^{ALM}(A,C,D) \\ G_3^{ALM}(A,B,D) \\ G_3^{ALM}(A,B,C)
  \end{bmatrix}.
\]
One can see that $S\sigma=\sigma^{-1}S$ for each $\sigma \in \Sym{4}$, and thus with the choice $\tau(\sigma):=\sigma^{-1}$ we get that $S$ preserves $\Sym{4}$. Thus, by Theorem~\ref{th:iteration}, $S^\infty=G^{ALM}_4$ is a geometric mean of four matrices. The same reasoning applies to $G^{BMP}$.
\end{example}

\begin{example}
 The map $S$ defining $G^P_4$ is 
 \[
  \begin{bmatrix}
   A \\ B \\ C \\D
  \end{bmatrix}
  \mapsto
  \begin{bmatrix}
   A \ssharp B \\ B \ssharp C \\ C \ssharp D \\ D \ssharp A
  \end{bmatrix}.
 \]
$S$ preserves the dihedral group $\Di{4}$. Therefore, provided the iteration process converges to a scalar $n$-tuple, $S^{\infty}$ is a quasi-mean with isotropy group containing $\Di{4}$.
\end{example}

\paragraph{Efficiency of the limit process}
As in the previous section, we are interested in seeing whether this approach, which is the one that has been used to prove invariance properties of the known limit means \cite{alm,bmp-means}, can yield better results for a different map $S$.

\begin{theorem}
 Let $S: (\spd{m})^n \to (\spd{m})^n$ preserve a group $H$. Then, the invariance group of each of its components $S_i$, $i=1,\dots, n$, is a subgroup of $H$ of index at most $n$.
\end{theorem}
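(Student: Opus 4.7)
The plan is to extract from the preservation condition a natural action of $H$ on the finite set of component functions and then to invoke orbit--stabilizer.

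First, I would unwind the preservation condition coordinatewise. Writing $(Sh)(\underline A)=\tau(h)S(\underline A)$ componentwise, with $\tau(h)\in H$ acting on the output tuple by the permutation convention of Section~\ref{s:compo}, gives the identity
\[
 S_i h = S_{\tau(h)(i)}
\]
as quasi-means of $n$ matrices, for every $h\in H$ and every $i \in \{1,\dots,n\}$. Hence the right action of $\Sym{n}$ on quasi-means recorded in Theorem~\ref{t:composition} restricts to a right action of $H$ on the finite collection $\{S_1,\dots,S_n\}$, a set of at most $n$ distinct quasi-means.

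Second, I would apply orbit--stabilizer to this $H$-action. The stabilizer of $S_i$ is exactly $H\cap\stab(S_i)$, which is patently a subgroup of $H$. Its orbit lies in $\{S_1,\dots,S_n\}$, and so has cardinality at most $n$. Orbit--stabilizer then yields $[H : H\cap\stab(S_i)] \leq n$, which is the bound claimed.

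The only point that needs checking is that $h \mapsto S_i h$ really defines a group action of $H$ on $\{S_1,\dots,S_n\}$. Closure on that set is the displayed identity above; compatibility $(S_i h_1) h_2 = S_i(h_1 h_2)$ is the right-action property of the $\Sym{n}$-action on quasi-means recalled in Section~\ref{s:compo}; and well-definedness requires no assumption that $\tau$ itself be a homomorphism, since $S_i h$ depends only on $S_i$ as a function and not on how the index $i$ was chosen within its equivalence class under $S_{(\cdot)}$. This is the one spot where the argument could conceivably go wrong, and once these routine checks are made, the orbit--stabilizer step closes the argument.
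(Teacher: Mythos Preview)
Your argument is correct and follows essentially the same route as the paper: both extract the componentwise identity $S_i h = S_{\tau(h)(i)}$ and use the finiteness of $\{S_1,\dots,S_n\}$ to bound $[H:H\cap\stab(S_i)]$ by $n$. The paper does this by partitioning $H$ into the fibers $I_k=\{h:\tau(h)(i)=k\}$, picking the largest one, and translating it into $\stab(S_i)\cap H$, whereas you package the same counting as an orbit--stabilizer computation; the two arguments are equivalent, yours being a slightly cleaner formulation.
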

\begin{proof}
Let $i$ be fixed, and set $I_k:=\{h \in H : \tau(h)(i)=k\}$. The sets $I_k$ are mutually disjoint and their union is $H$, so the largest one has cardinality at least $|H|/n$, let us call it $I_{\bar k}$.

From the hypothesis that $S$ preserves $H$, we get $S_i h(\underline A)=S_{\bar k}(\underline A)$ for each $\underline A$ and each $h \in I_k$. Let $\bar h$ be an element of $I_k$; then $S_ih(\bar h^{-1} \underline A)=S_{\bar k}(\bar h^{-1}\underline A)=S_i(\underline A)$. Thus the isotropy group of $S_i$ contains all the elements of the form $h\bar h^{-1}$, $h \in I_k$, and those are at least $|H|/n$.
\end{proof}

The following result holds \cite[page 147]{dixonmort}.
\begin{theorem}
 For $n>4$, the only subgroups of $\Sym{n}$ with index at most $n$ are:
\begin{itemize}
 \item the alternating group $\Alt{n}$,
 \item the $n$ groups $T_k=\{\sigma \in \Sym{n}: \sigma(k)=k\}$, $k=1,\dots,n$, all of which are isomorphic to $\Sym{n-1}$,
 \item for $n=6$ only, another conjugacy class of $6$ subgroups of index $6$ isomorphic to $\Sym{5}$.
\end{itemize}
Analogously, the only subgroups of $\Alt{n}$ with index at most $n$ are:
\begin{itemize}
 \item the $n$ groups $U_k=\{\sigma \in \Alt{n}: \sigma(k)=k\}$, $k=1,\dots,n$, all of which are isomorphic to $\Alt{n-1}$,
 \item for $n=6$ only, another conjugacy class of $6$ subgroups of index $6$ isomorphic to $\Alt{5}$.
\end{itemize}
\end{theorem}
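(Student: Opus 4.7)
My plan is to exploit the coset-action construction already introduced in the paper. Given $H \leq \Sym{n}$ of index $r \leq n$, the action on right cosets yields the homomorphism $\rho_H: \Sym{n} \to \Sym{r}$, whose kernel $K$ is normal in $\Sym{n}$ and contained in $H$. For $n > 4$ the group $\Alt{n}$ is simple, so the only normal subgroups of $\Sym{n}$ are $\{e\}$, $\Alt{n}$, and $\Sym{n}$.

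A three-case analysis on $K$ handles almost everything. If $K = \Sym{n}$, then $H = \Sym{n}$ and the index is $1$. If $K = \Alt{n}$, then $\Alt{n} \leq H$ and hence $H \in \{\Alt{n}, \Sym{n}\}$. The remaining case $K = \{e\}$ gives an embedding $\Sym{n} \hookrightarrow \Sym{r}$; comparing orders forces $r = n$, so $H$ has index exactly $n$ and is the stabilizer of a point in some faithful transitive action of $\Sym{n}$ of degree $n$. Thus classifying index-$n$ subgroups of $\Sym{n}$ reduces to classifying faithful transitive degree-$n$ actions up to conjugacy.

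The natural action contributes the $n$ conjugate subgroups $T_k$, all isomorphic to $\Sym{n-1}$. For $n > 4$ with $n \neq 6$, I would then invoke the classical theorem that $\mathrm{Out}(\Sym{n})$ is trivial, from which it follows that any faithful transitive action of degree $n$ is equivalent to the natural one and hence the $T_k$ are the only index-$n$ subgroups up to conjugacy. For $n = 6$, the exceptional outer automorphism of $\Sym{6}$ produces a second conjugacy class of index-$6$ subgroups, again isomorphic to $\Sym{5}$ but containing no fixed point; since $|\mathrm{Out}(\Sym{6})| = 2$, no further classes arise. The alternating-group statement is parallel: simplicity of $\Alt{n}$ for $n \geq 5$ collapses the coset-action kernel to $\{e\}$ or $\Alt{n}$, the former again forcing $r = n$, and the natural class yields the $U_k \cong \Alt{n-1}$, with an exceptional class for $n = 6$ inherited from the outer automorphism.

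The main obstacle is the exceptional case $n = 6$: pinning down exactly one extra conjugacy class requires either an explicit combinatorial construction (Sylvester's duads and synthemes, or the transitive action of $\Sym{6}$ on the six cosets of a copy of $\mathrm{PGL}_2(\mathbb{F}_5)$) or a direct computation of $\mathrm{Out}(\Sym{6})$. Ruling out similar exotic classes for any larger $n$ ultimately rests on the same non-triviality phenomenon occurring only at $n = 6$, and so the bulk of the work in a self-contained proof would go into that single case.
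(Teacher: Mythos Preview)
The paper does not give its own proof of this theorem: it is stated with a bare citation to Dixon and Mortimer \cite[page~147]{dixonmort}. So there is nothing in the paper to compare against beyond that reference.

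Your sketch is essentially the standard argument and is sound. The three-case split on $\ker\rho_H$ using the normal-subgroup structure of $\Sym{n}$ is exactly the right opening, and the reduction of the remaining case to classifying faithful transitive degree-$n$ actions (hence to $\mathrm{Out}(\Sym{n})$) is correct. One small point to tighten for the alternating case: the quantity that controls the number of conjugacy classes of index-$n$ subgroups of $\Alt{n}$ is $\mathrm{Aut}(\Alt{n})$ modulo the image of $\Sym{n}$ acting by conjugation, not $\mathrm{Out}(\Alt{n})$ itself (which is $\mathbb Z/2$ for all $n\geq 4$, $n\neq 6$). The conclusion is unchanged because $\mathrm{Aut}(\Alt{n})\cong\Sym{n}$ for $n\neq 6$, but your phrasing ``inherited from the outer automorphism'' should be read in that light. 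You are also right that the $n=6$ case is where a fully self-contained write-up would concentrate its effort; invoking $|\mathrm{Out}(\Sym{6})|=2$ (equivalently, the syntheme/duad construction) is the cleanest route, and is independent of the statement being proved, so there is no circularity.
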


This shows that whenever we try to construct a geometric mean of $n$ matrices by taking a limit processes, such as in the Ando--Li--Mathias approach, the isotropy groups of the starting means must contain $\Alt{n-1}$. On the other hand, by Theorem~\ref{t:isosharp}, we cannot generate means whose isotropy group contains $\Alt{n-1}$ by composition of simpler means; therefore, there is no simpler approach than that of building a mean of $n$ matrices as a limit process of means of $n-1$ matrices (or at least quasi-means with $\stab Q=\Alt{n-1}$, which makes little difference). This shows that the recursive approach of $G^{ALM}$ and $G^{BMP}$ cannot be simplified while still maintaining P3 (permutation invariance).

\section{Computational issues and numerical experiments}\label{s:exp}
\paragraph{A faster mean of four matrices} The results we have exposed up to now are negative results, and they hold for $n>4$. On the other hand, it turns out that for $n=4$, since $\Alt{n}$ is not a simple group, there is the possibility of obtaining a mean that is computationally simpler than the ones in use. Such a mean is the one we described in Example~\ref{tournamentmean}. Let us take any mean of three elements (we shall use $G^{BMP}_3$ here since it is the one with the best computational results); the new mean is therefore defined as
\begin{multline}\label{defgnew}
 G^{NEW}_4(A,B,C,D) := G_3^{BMP}\left( (A \ssharp B) \ssharp (C \ssharp D), (A \ssharp C) \ssharp (B \ssharp D),\right.\\\left. (A \ssharp D) \ssharp (B \ssharp C) \right).
\end{multline}
Notice that only one limit process is needed to compute the mean; conversely, when computing $G_4^{ALM}$ or $G_4^{BMP}$ we are performing an iteration whose elements are computed by doing four additional limit processes; thus we may expect a large saving in the overall computational cost.

We may extend the definition recursively to $n>4$ elements using the construction described in \eqref{defbmp}, but with $G^{NEW}$ instead of $G^{BMP}$. The total computational cost, computed in the same fashion as for the ALM and BMP means, is $O(n! p_3 p_5 p_6 \dots p_n m^3)$. Thus the undesirable dependence from $n!$ does not disappear; the new mean should only yield a saving measured by a multiplicative constant in the complexity bound.

\paragraph{Benchmarks}
We have implemented the original BMP algorithm and the new one described in the above section with MATLAB\textregistered{} and run some tests on the same set of examples used by Moakher \cite{moakher} and Bini \emph{et al.} \cite{bmp-means}. It is an example deriving from physical experiments on elasticity. It consists of five sets of matrices to average, with $n$ varying from 4 to 6, and $6\times 6$ matrices split into smaller diagonal blocks.

For each of the five data sets, we have computed both the BMP and the new matrix mean. The CPU times are reported in Table~\ref{t:times}. As a stopping criterion for the iterations, we used
\[
 \max_{i,j,k} \left\vert(A^{(h)}_i)_{jk}-(A^{(h+1)}_i)_{jk}\right\vert < 10^{-13}.
\]
\begin{table}
\begin{tabular}{ccc}
Data set (number of matrices) & BMP mean & New mean \\
\hline
NaClO$_3$ (5) & 1.3E+00 & 3.1E-01\\
Ammonium dihydrogen phosphate (4) & 3.5E-01 &  3.9E-02\\
Potassium dihydrogen phosphate (4) & 3.5E-01 & 3.9E-02\\
Quartz (6) & 2.9E+01 & 6.7E+00\\
Rochelle salt (4) & 6.0E-01 & 5.5E-02
\end{tabular}
\caption{CPU times for the elasticity data sets}\label{t:times}
\end{table}
As we expected, our mean provides a substantial reduction of the CPU time which is roughly by an order of magnitude. 

Following Bini \emph{et al.} \cite{bmp-means}, we then focused on the second data set (ammonium dihydrogen phosphate) for a deeper analysis; we report in Table~\ref{t:hear} the number of iterations and matrix roots needed in both computations.
\begin{table}\label{t:hear}
\begin{small}
\begin{tabular}{ccc}
 & BMP mean & New mean\\
\hline
Outer iterations ($n=4$) & 3 & none\\
Inner iterations ($n=3$) & $4 \times 2.0$ (avg.) per outer iteration & 2\\
Matrix square roots (\texttt{sqrtm}) & 72 & 15\\
Matrix $p$-th roots (\texttt{rootm}) & 84 & 6\\
\end{tabular}
\end{small}
\caption{Number of inner and outer iterations needed, and number of matrix roots needed (ammonium dihydrogen phosphate)}
\end{table}

The examples in these data sets are mainly composed of matrices very close to each other; we shall consider here instead an example of mean of four matrices whose mutual distances are larger:
\begin{equation}\label{matricibuffe}
\begin{aligned}
 A&=\begin{bmatrix}
 1 &0& 0\\0 & 1 & 0\\ 0 & 0 & 1     
   \end{bmatrix},\!&
 B&=\begin{bmatrix}
    3 &0& 0\\0 & 4 & 0\\ 0 & 0 & 100
   \end{bmatrix},\!
&C&=\begin{bmatrix}
    2 &1& 1\\1 & 2 & 1\\ 1 & 1 & 2
   \end{bmatrix},\!&
D&=\begin{bmatrix}
    20 &0& -10\\0 & 20 & 0\\ -10 & 0 & 20
   \end{bmatrix}.
\end{aligned}
\end{equation}
The results regarding these matrices are reported in 
\begin{table}
\begin{small}
\begin{tabular}{ccc}
 & BMP mean & New mean\\
\hline
Outer iterations ($n=4$) & 4 & none\\
Inner iterations ($n=3$) & $4 \times 2.5$ (avg.) per outer iteration & 3\\
Matrix square roots (\texttt{sqrtm}) & 120 & 18\\
Matrix $p$-th roots (\texttt{rootm}) & 136 & 9\\
\end{tabular}
\end{small}
\caption{Number of inner and outer iterations needed, and number of matrix roots needed}\label{t:caso}
\end{table}
Table~\ref{t:caso}.

\paragraph{Accuracy} It is not clear how to check the accuracy of a limit process yielding a matrix geometric mean, since the exact value of the mean is not known \emph{a priori}, apart from the cases in which all the $A_i$ commute. In those cases, P1 yields a compact expression for the result. So we cannot test accuracy in the general case; instead, we have focused on two special examples.

As a first accuracy experiment, we computed $G(M^{-2},M,M^2,M^3)-M$, where $M$ is taken as the first matrix of the second data set on elasticity; the result of this computation should be zero according to P1. As a second experiment, we tested the validity of P9 (determinant identity) on the means of the four matrices in \eqref{matricibuffe}. The results of both computations are reported in Table~\ref{t:accuracy};
\begin{table}
\begin{center}
\renewcommand{\arraystretch}{1.2}
\begin{tabular}{cc}
Operation & Result\\
\hline
 $\left\Vert G^{BMP}(M^{-2},M,M^2,M^3)-M\right\Vert_2$ & 4.0E-14\\
 $\left\Vert G^{NEW}(M^{-2},M,M^2,M^3)-M\right\Vert_2$ & 2.5E-14\\
\hline
$\left\vert\det(G^{BMP}(A,B,C,D))-(\det(A)\det(B)\det(C)\det(D))^{1/4}\right\vert$ & 5.5E-13\\
$\left\vert\det(G^{BMP}(A,B,C,D))-(\det(A)\det(B)\det(C)\det(D))^{1/4}\right\vert$ & 2.1E-13
\end{tabular} 
\end{center}
\caption{Accuracy tests}\label{t:accuracy}
\end{table}
the results are well within the errors permitted by the stopping criterion, and show that both algorithms can reach a satisfying precision.

\section{Conclusions}
\paragraph{Research lines} The results of this paper show that, by combining existing matrix means, it is possible to create a new mean which is faster to compute than the existing ones. Moreover, we show that using only function compositions and limit processes with the existing proof strategies, it is not possible to achieve any further significant improvement with respect to the existing algorithms. In particular, the dependency from $n!$ cannot be removed. New attempts should focus on other aspects, such as:
\begin{itemize}
 \item proving new ``unexpected'' algebraic relations involving the existing matrix means, which would allow to break out of the framework of Theorem~\ref{t:isosharp}--Theorem~\ref{th:iteration}.
 \item introducing new kinds of matrix geometric means or quasi-means, different from the ones built using function composition and limits.
 \item proving that the Riemannian centroid \eqref{cartan1} is a matrix mean in the sense of Ando--Li--Mathias (currently P4 is an open problem), or providing faster and reliable algorithms to compute it.
\end{itemize}
It is an interesting question whether it is possible to construct a quasi-mean whose isotropy group is exactly $\Alt{n}$.

\paragraph{Acknowledgments} The author would like to thank Dario Bini and Bruno Iannazzo for enlightening discussions on the topic of matrix means, and Roberto Dvornicich and Francesco Veneziano for their help with the group theory involved in the analysis of the problem.

%\bibliography{other_means}
%included .bbl

\end{document}